\documentclass[conference]{IEEEtran}
\IEEEoverridecommandlockouts
\usepackage{amsmath,amssymb,amsfonts,amsthm}
\usepackage{graphicx}
\usepackage{textcomp}
\usepackage{xcolor}
\usepackage{xspace}
\usepackage{booktabs}
\usepackage{algorithm}
\usepackage{algpseudocode}
\usepackage{xurl}
\usepackage{tabularx}
\allowdisplaybreaks[1]

\newtheorem{theorem}{Theorem}

\newtheorem{corollary}{Corollary}

\definecolor{lightgray}{gray}{0.9}
\definecolor{darkergray}{gray}{0.8}
\definecolor{forestgreen}{RGB}{34,139,34}

\usepackage{hyperref}
\hypersetup{hidelinks}
\usepackage[capitalize]{cleveref}

\DeclareMathOperator{\OPT}{OPT}

\begin{document}

\title{Fast and Accurate Hydro Unit Commitment}

\author{
\IEEEauthorblockN{Shaked Regev and Evan J. R. Brody}
\IEEEauthorblockA{
Oak Ridge National Laboratory\\
1 Bethel Valley Road\\
Oak Ridge, Tennessee, USA
}
\thanks{Notice: This manuscript has been authored by UT-Battelle, LLC, under contract DE-AC05-00OR22725 with the US Department of Energy (DOE). The US government retains and the publisher, by accepting the article for publication, acknowledges that the US government retains a nonexclusive, paid-up, irrevocable, worldwide license to publish or reproduce the published form of this manuscript, or allow others to do so, for US government purposes. DOE will provide public access to these results of federally sponsored research in accordance with the DOE Public Access Plan (\href{https://www.energy.gov/doe-public-access-plan}{https://www.energy.gov/doe-public-access-plan}). Research sponsored by the Laboratory Directed Research and Development Program of Oak Ridge National Laboratory, managed by UT-Battelle, LLC, for the US Department of Energy.}
}

\maketitle
 
\begin{abstract}
We introduce a fast novel algorithm to commit hydro-generators.  We show that our algorithm can commit 56 generators for 200 days 10,000 times faster than Hexaly, the leading commercial solver for scheduling problems, without sacrificing solution quality. Our method can commit 1792 generators for an entire year in approximately 20 seconds on commodity hardware. Hexaly is unable to solve problems of this size in reasonable time. We attain this runtime improvement by introducing a greedy-heuristic tailored for hydro units. We provide a direction for a possible proof  that the error our solver makes decreases with the problem size. This is supported by our experimental results.
We prove that our solver's runtime scales log-linearly in the number of generators and time periods. \end{abstract}
 
\begin{IEEEkeywords}
unit commitment, power systems, hydro generators, long term scheduling
\end{IEEEkeywords}

\section{Introduction}
\label{sec:introduction}
Hydropower is the main naturally replenishing, quickly dispatchable, and zero marginal cost generation source worldwide. Further, it can react quickly to respond to changing grid conditions and can start up with no supporting electricity, for example after a blackout~\cite{StanfordHydro}.  Worldwide it accounts for about 14\% of total electricity generation~\cite{lowcarbon}.


Co-optimizing hydro units with traditional, fuel-consuming units is essential to reduce generation costs while properly managing water levels. This is known as long term hydrothermal scheduling (\textbf{LTHS})~\cite{LTHS2014}. 
We focus on hydropower reservoirs, which are charged uncontrollably by rainfall and discharged controllably, subject to water balance constraints~\cite{Finardi2006835}.  We defer pumped hydropower, which functions as a battery with controllable charging and discharging~\cite{fenrg}, to future work. 

The main challenge in solving LTHS is that traditional units must be highly optimized over relatively short time scales to reduce costs. However, hydro units main cost is an opportunity cost, i.e. if they run now, they may not be able to run later. This means they must be optimized over long (typically yearly) time-horizons~\cite{LTHS2014,2017LTHS}. 

Existing work typically uses relatively slow LTHS solvers, which cannot run frequently. Therefore they require relatively accurate rainfall predictions or methods to deal with the inaccuracies of said predictions~\cite{CHAVES2025124730, WANG2026100701}. This introduces a tradeoff between optimizing for low generation costs and meeting contingencies in the mean and allowing for sufficient slack to account for inevitable large deviations from predictions\footnote{For example, in January-September 2026, rainfall was 8\% lower on average in the US compared to the same period in 2025. Some states received less than 75\% of 2026 rainfalls~\cite{NOAA2026}.}. 

Instead, knowing that forecasts will frequently fluctuate, we focus on developing a very fast solver for committing hydro units, which can be rerun every 5 minutes without contributing substantially to the overall time to solve LTHS. Our solver can consider long term predictions, but is agile enough to adapt as they change. It is designed primarily for grids where hydropower accounts for a relatively small percentage of total electricity generation, such as the US, where it is about 6\%~\cite{EIAmakeup}. The main contributions of this paper are:
\begin{itemize}
    \item Introducing a fast, scalable log-linear algorithm for scheduling hydro units as an alternative to existing exponential ones. 
    \item Showing that in all of our tests, our solvers produce solutions with identical objective values to a commercial solver in much less time. Our solutions are always feasible, with no constraint violations, and within tolerance of the lower bound on the optimum.
    \item Providing some analytical guarantees on the accuracy and runtime of our  algorithms.
\end{itemize}
Some limitations on the scope of this study include:
\begin{itemize}
\item Our grid model does not include transmission constraints. One can run our algorithm on a zonal level to set target production periods for the hydro units. Then one can add these targets to the objective function and solve a full alternating current optimal power flow (\textbf{ACOPF}) problem~\cite{brody2026fastrelaxandroundunitcommitment} to guarantee feasibility.  
    \item Our solver is not a general solution for all integer programs or scheduling problems.

\end{itemize}

The paper is organized as follows: \cref{sec:mathhydro} introduces our long-horizon planning hydro unit commitment (\textbf{UC}) problem formulation and algorithm.
\cref{sec:details} explains the details of the data acquisition and simulation setup. \cref{sec:results} compares our solver with the commercial solver Hexaly~\cite{hexaly_optimizer_2026}.
\cref{sec:summary} summarizes our results and future research directions. 


\section{Mathematical Formulation}
\label{sec:mathhydro}
We now introduce a mathematical formulation for the problem of committing  hydro units. This approach is intended to be used to commit hydro units before committing traditional units. This decoupling is justified because hydro units recharge by natural water flow and cannot indefinitely hoard water due to dam capacity. So as long units maintain high water levels long term, there is little opportunity cost. 

Hydro units are fast ramping \cite{Kincic} and typically have a narrow efficient operation regime \cite{KongIlievSkjelbred2024LifetimeHydraulicTurbineCost}. Here, we make the simplifying assumptions that hydro units can only be on at max capacity or off, the starting water levels are far away from $0$ and the maximum dam capacity, and water evaporation rates are independent of water levels. 

We denote by $W_i([T])$ the amount of water projected to flow into reservoir $i$ over a certain time window $[T]\doteq\{1,\ldots,T\}$. We denote by $\tilde{W}_i$ the water required for hydro unit $i$ to produce electricity in one time period. The number of time periods it can operate within window $[T]$ is $\pi_i \doteq \min(W_i([T])/\tilde{W}_i, T)$. This reduces our problem to finding in which periods $t\in [T]$ to operate each unit.

Hydro generators not being able to run all of the time (or even most of it) means they require optimizing over long-term, for example yearly, cycles. We therefore design a method to commit hydro generators based on shifting predicted demands.  For this purpose, we introduce the notion of the marginal cost of the system.

The marginal cost of a UC of traditional generators is the cost difference to produce slightly more or less power. Consider a cost model unit $i$'s production
\begin{equation}
\label{eq:traditional_gen}
u_i(a_iP_i^2+b_iP_i+c_i),
\end{equation}
where $u_i=0$ if the unit is off and $1$ if it's on, $P_i$ is the power produced by the unit, and $a_i, b_i, c_i$ are cost coefficients. If $u_i=0$, unit $i$'s marginal cost is undefined (it cannot produce slightly more or slightly less power). If $u_i=1$, the marginal cost of a unit with the production profile of \cref{eq:traditional_gen} is its derivative with respect to $P_i$, $2a_iP_i+b_i$.

Each $P_i$ is limited between some maximum and minimum due to production and ramping capabilities. All on units not at the feasibility boundary must have the same marginal cost $(2a_iP_i+b_i)$ at the optimal solution. Otherwise, shifting one unit's production up and one down would reduce cost. We define the marginal cost of the system as $(2a_iP_i+b_i)$ for one such of these units $i$.

We derive marginal costs from the solutions to a UC problem with traditional generators~\cite{regev2026economicUC}.
\cref{fig:marginal} shows how commonly certain demands coincide with certain marginal costs. A high demand always leads to high marginal costs and low demand always leads to low costs. There are small deviations in this trend due to periods that are less constrained by future generation requirements and ``free'' power from ramping generators. 

\begin{figure}[htbp]
\centering
\includegraphics[width=1.0\linewidth]{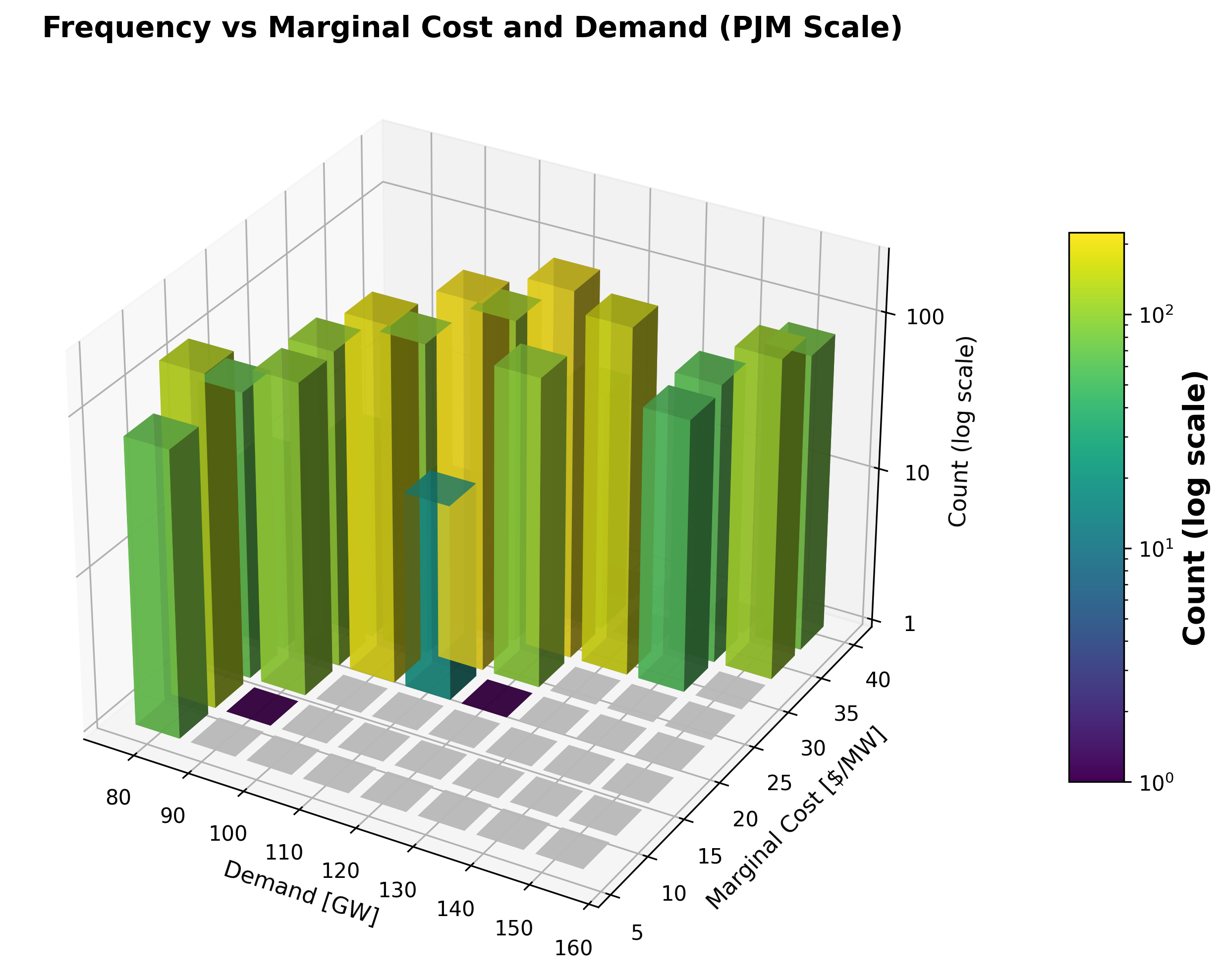}
\caption{\label{fig:marginal}Histogram of the frequency of demand-marginal cost pairs (counts) at the PJM scale. Marginal cost and demand are highly positively correlated. All rows and columns have at most 3 adjacent nonzero boxes, most have 2. This shows that outliers do not deviate much.}
\end{figure}

\cref{fig:marginal_fits} shows 1, 2, and 3 degree polynomial fits of the marginal cost to the demand, with their adjusted R-squared $\Bar{R}^2$. The $\approx 2\times$ difference between minimum and maximum demand to $\approx 8 \times$ difference between minimum and maximum marginal cost further motivates a cubic fit. A 4th degree increased $\Bar{R}^2$ less than $0.1\%$ and made no visual difference. 

\begin{figure}[htbp]
\centering
\includegraphics[width=1.0\linewidth]{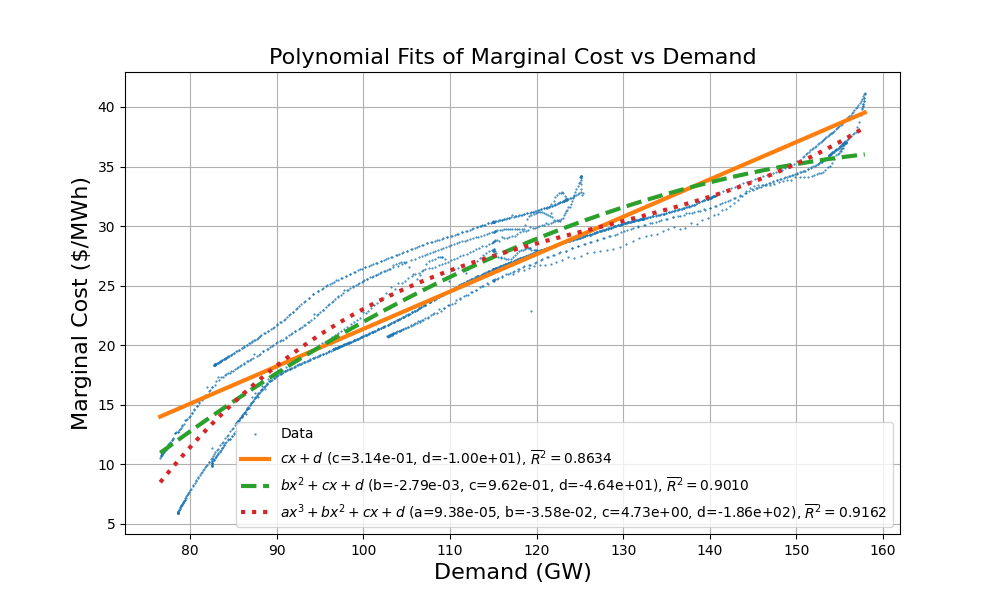}
\caption{\label{fig:marginal_fits} Polynomial fits of marginal cost as a function of demand. Marginal cost and demand are highly positively correlated, with a few outliers. The degree 3 polynomial has the largest adjusted R-squared ($\Bar{R}^2$) and is visually most similar to the data. The relative scale of the axes also suggests a cubic fit. The linear term is always dominant and positive. This suggests it is almost always better to shift net demand from high demand periods to low demand periods with hydropower or storage.}
\end{figure}

The linear term is dominant in all the fits shown here. The linear term of the 4 degree fit has a non-dominant negative coefficient, a clear sign of over-fitting, based on visual inspection of the data. 

This fit motivates the heuristic of using hydropower to prefer reducing demand from high demand periods over low demand periods. This is also known as load flattening or peak shaving. It also shows that even if pumped storage (or any other battery) does not retain energy perfectly, i.e. it can discharge a smaller amount than it charges, it can still be well worth-while to shift demand from low to high demand periods.

We define the demand remaining after hydro-generator commitment at time $t$ optimized over $T$ time periods as $\Bar{D}_{t}=D_t-\sum_{i=1}^nu_{i,t}\kappa_i$, where $\kappa_i$ is the power hydro-generator $i$ can produce. Smoothing out $\Bar{D}_{t}$ will lead to lower overall costs, because the marginal cost is highest when demand is highest. This leads to the following optimization problem: 

\begin{align}
\min_{u_{i,t}} \quad &
\text{Var}\left(D_t-\sum_{i=1}^nu_{i,t}\kappa_i\right)
\label{eq:HydroCommit}
\\
\text{s.t.} \quad &\forall 1\leq i\leq n: \sum_{t=1}^T u_{i,t} = \pi_i, u_{i,t}\in \{0,1\}
\nonumber
\end{align}
In a production setting, one would supplement this solve with a fast full ACOPF solve~\cite{brody2026fastrelaxandroundunitcommitment}, which could potentially change the commitments of the hydro units. This could be combined with new rainfall forecasts.
\begin{algorithm}[htbp]
\caption{Demand Balancing}
\label{alg:hydrocommit}
\begin{algorithmic}[1]
\Require Capacities $\kappa_i$, maximum operating periods $\pi_i$ for $i=1,\dots,n$; demands $d_t$ for $t=1,\dots,T$
\State $G_t \gets 0 \quad \forall t$ \Comment{Generated hydro-power at period \(t\)}
\State Initialize indexed max-heap $H$ with keys $k_t = D_t$
\State $\text{used}_t \gets 0 \; \forall t$ 
\Comment{Tracks assignments}

\State Re-index such that $\kappa_1 \sqrt{\pi_1}\geq\ldots \kappa_n\sqrt{\pi_n}$ \Comment{Improves numerical stability}
\For{\(j\gets 1\)\textbf{\ to\ }\(n\)}
    \State $\text{placed} \gets 0$
    \While{$\text{placed} < \pi_i$}
        \State $\tau \gets \mathrm{argmax}_{t} \; k_t$
        \If{$\text{used}_{\tau} = j$}
            \State Set $k_\tau \gets \infty$ and update $H$
        \Else
            \State $\text{used}_\tau \gets j$
            \State $\text{placed}\gets\text{placed}+1$,
            \State \(G_\tau\gets G_\tau + \kappa_i\)
            \State Update $k_\tau \gets D_\tau - G_\tau$ in $H$
        \EndIf
    \EndWhile
    \For{\(t\gets1\)\textbf{\ to\ }\(T\)}
        \If{$k_t = \infty$}
            \State Update $k_t \gets D_t - G_t$ in \(H\)
        \EndIf
    \EndFor
\EndFor
\State \Return $(L_t), (D_t - G_t), \mathrm{Var}(D_t - G_t)$
\end{algorithmic}
\end{algorithm}
Solving \cref{eq:HydroCommit} exactly is NP-hard (see \cref{thm:hydro np-hard} in Appendix \ref{sec:AppendixB}). \cref{alg:hydrocommit} approximates a solution by ``greedily'' assigning a generator which still has remaining periods to the period with the highest remaining demand. This is equivalent to locally optimizing \cref{eq:HydroCommit}. \cref{alg:hydrocommit} runs in log-linear time (see \cref{thm:runtime_hydro} in Appendix \ref{sec:AppendixB}).

\section{Simulation Details}
\label{sec:details}
We use hydro unit data from the US Energy Information Administration (\textbf{EIA})~\cite{eia8602024}. From this set, we take all hydro units in PJM. In this work we consider all hydro-generators as reservoirs, as opposed to pumped storage, which we save for future work. We aggregate the nameplate capacities for each plant from~\cite{eia8602024}. The capacity factor is the percentage of time a hydro-generator can operate. 

In the contiguous US, the median capacity factor is $36-39\%$, with large variance~\cite{Uria-Martinez2021USHydropowerMarket}. In the absence of more granular data, we assign different capacity factors between $28$ and $47\%$ to the generators to test \cref{alg:hydrocommit}. This gives us a base set of $56$ hydro-generators for PJM's demand. For our scaling experiments, we multiply the total demand by a factor $N$, and create $N-1$ noisy copies of the generators to commit. The copies have a capacity that shifted up to $\pm 10\%$ from the original generator, and can operate up to $\pm 10$ periods within a $24$ hour time window compared to the original generator. 

 Due to the long run times required for comparison over  thousands of time periods at many different scales, we run \cref{alg:hydrocommit} and our comparisons on a particular dataset. We scale this dataset and perturb it randomly to ensure our method is robust. This is the limit of our scope.

\section{Results}
\label{sec:results}
We compare our specialized solver for \cref{eq:HydroCommit} to the commercial integer program solver Hexaly~\cite{hexaly_optimizer_2026}. Hexaly specializes in general integer optimization scheduling problems with many constraints, such as \cref{eq:HydroCommit}.

\cref{fig:hydro_relative} compares the relative differences between the solutions produced by
\cref{alg:hydrocommit} and a full integer program solve with Hexaly with the lower bound on the optimum. \cref{alg:hydrocommit} consistently has a relative error below $10^{-7}$. Hexaly always produces solutions with a lower error than the $10^{-4}$ tolerance. This was true even when we tested it with stricter tolerances.  Hexaly reported timeout errors (even before it met the time limit) for problems larger than in \cref{fig:hydro_relative}.

\begin{figure}[htbp]
\centering
\includegraphics[width=1.0\linewidth]{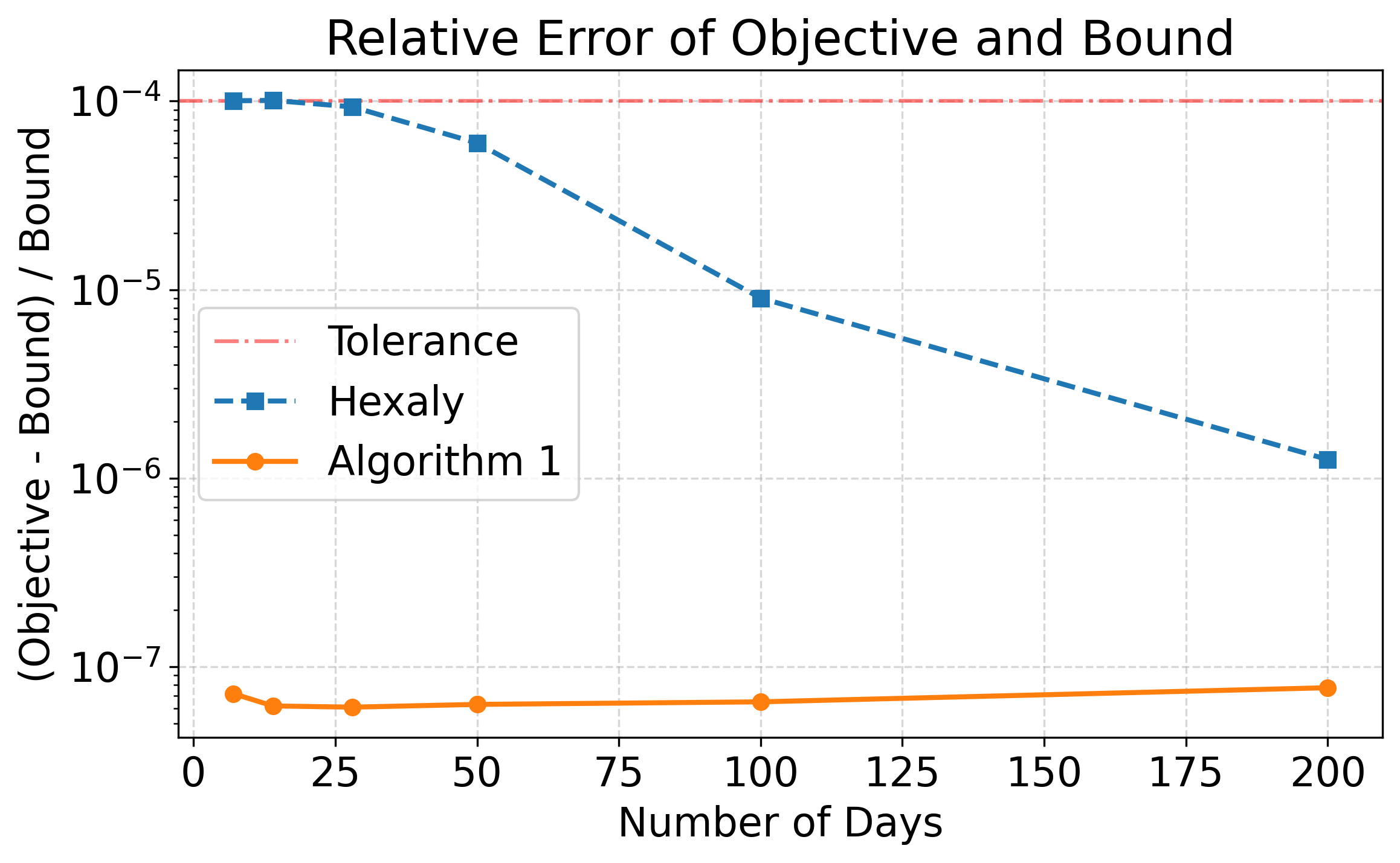}
\caption{\label{fig:hydro_relative} \cref{alg:hydrocommit} difference from the lower bound on the optimum compared to Hexaly integer program solver for committing 56 generators for different time horizons. Both solvers consistently produce solutions with objective deviations within tolerance of the lower bound.} 
\end{figure}

\cref{fig:hydro_compare} compares run times of \cref{alg:hydrocommit} and Hexaly. \cref{alg:hydrocommit} is $1000$ times faster on the smallest problem we tested
and this gap grew with the problem size.

\begin{figure}[htbp]
\centering
\includegraphics[width=1.0\linewidth]{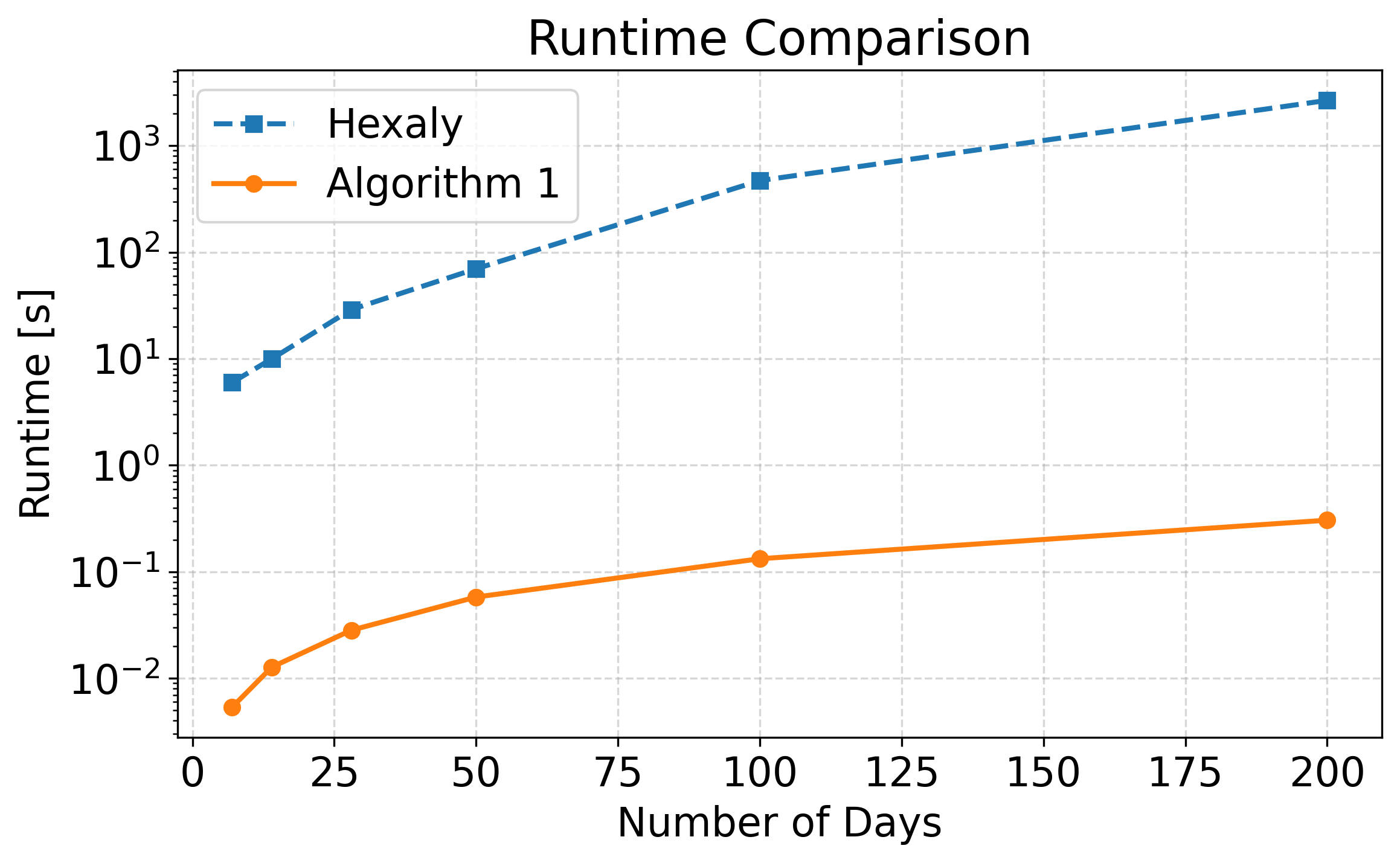}
\caption{\label{fig:hydro_compare} \cref{alg:hydrocommit} runtime compared to Hexaly integer program solver for committing 56 generators for different time horizons.  \cref{alg:hydrocommit}  gets much faster solutions, a gap that increases roughly linearly in the problem size. It was almost $10,000$ times faster on the largest problem shown here.} 
\end{figure}

\cref{fig:hydro_commit} shows normalized $\sigma_N\propto \sqrt{\text{Var}(\Bar{D}_\mathcal{T})}/m$, where $m$ is the number of generators. $\sigma_N$ is normalized so that with $56$ generators allocated over 7 days, $\sigma_N=1$.  
\begin{figure}[htbp]
\centering
\includegraphics[width=1.0\linewidth]{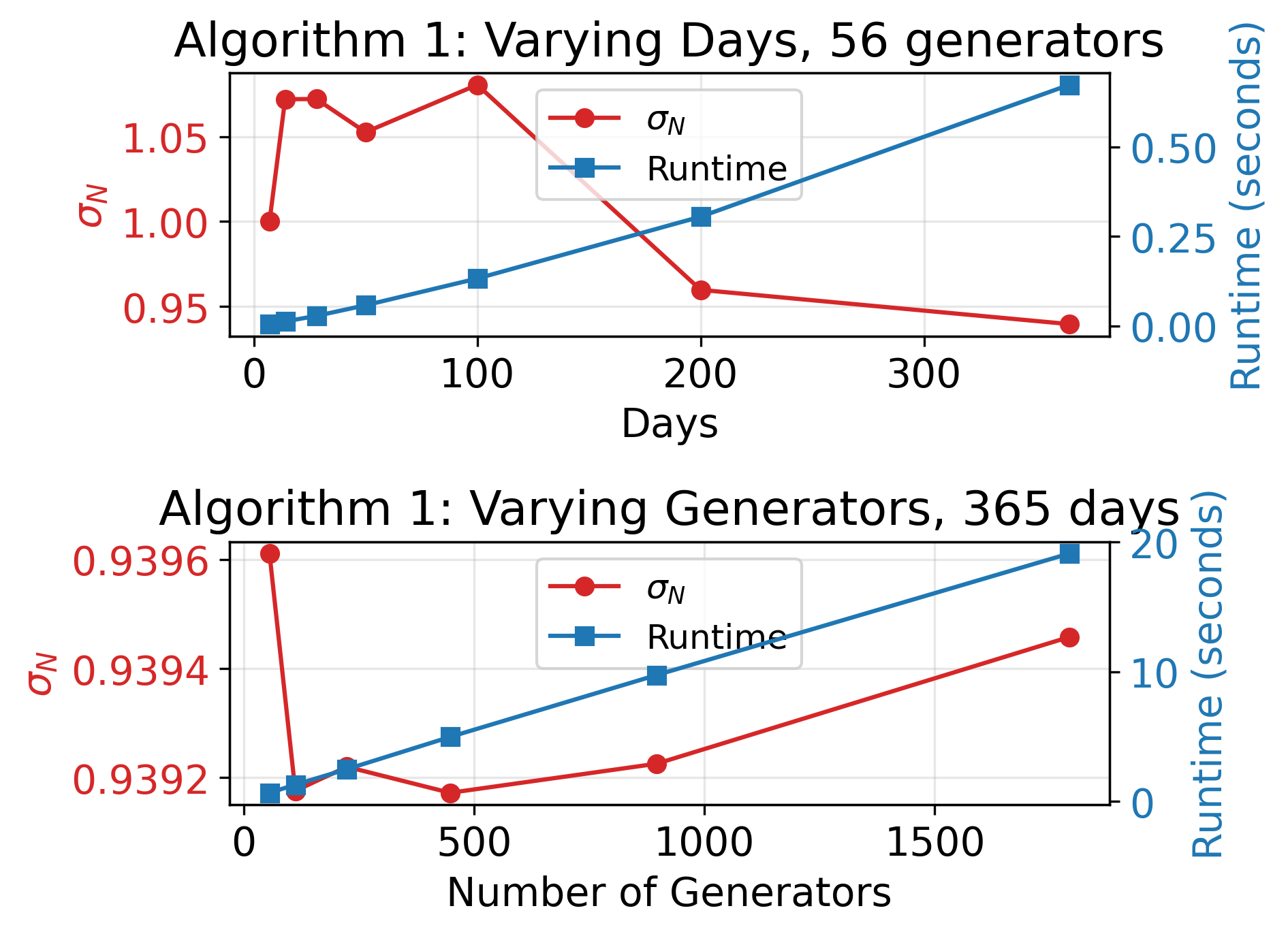}
\caption{\label{fig:hydro_commit} \cref{alg:hydrocommit}'s normalized objective and runtime varying the number of days and generators. Its performance is stable as the demand (and number of generators to supply it) increase and in optimizing over longer stretches. Its runtime increases linearly in the number of generators, and slightly faster than linearly in the number of days (or periods) as \cref{thm:runtime_hydro} guarantees.}
\end{figure}

\cref{alg:hydrocommit} retains accuracy in its objective and its run time scales almost linearly. It can commit 1792 generators for an entire year in 20 seconds. This problem is 60 times larger than the largest problem Hexaly could run. This is useful because rainfall follows yearly cycles. So it can easily run at 5-minute intervals to keep up with evolving demand prediction and dam water levels (which impact how many periods within the year they can operate). The run time is short enough such that an operator can rerun \cref{alg:hydrocommit} whenever demand or rainfall forecasts change.

\section{Conclusions and Further Work}
\label{sec:summary}
We introduce an algorithm to commit many hydro-generators over many periods. We analytically prove that it scales log-linearly in the number of generators and periods. We argue that a bound on the solution's distance from optimality must become tighter as the problem grows. We verify our algorithm by checking its  objective and its scaling in practice. 

Our algorithm can commit 1792 generators in 5 minute periods for a year in less than 10 seconds. Its experimental scaling matches the analytical runtime analysis. This means that it is fast enough to rerun at 5 minute intervals when new rainfall predictions or new constraints come in on a American interconnect scale. It is almost 10,000 times faster than Hexaly on the largest problem Hexaly could solve, committing 56 generators over 200 days. 

All solutions satisfy all constraints and are within $10^{-7}$ relative tolerance of lower bounds on the optimum and within $10^{-4}$ relative tolerance of solutions produced by Hexaly. This accuracy is substantially higher than our predicted analytical bound. 
Our algorithm retains accuracy at scales we tested.

In future research, we will incorporate transmission constraints in a subsequent solver step that enforces them strictly. The first step will incorporate the time constraints implicitly by adding a penalty to the objective function in the full ACOPF solve (the second step). Using the insights of \cref{fig:marginal,fig:marginal_fits}, we will incorporate batteries, including hydro-storage.

\section*{Acknowledgements}
 We thank Slaven Pele\v{s} for his technical review. We thank Eve Tsybina and James Nutaro for helping write the proposal that funded this work.
\appendices
\section{Asymptotic optimality}
\label{sec:AppendixA}

To bound \cref{alg:hydrocommit}'s distance from optimality, we define $\Bar{D}^*_\mathcal{T}$, the optimal solution to \cref{eq:HydroCommit}, and $K\doteq\max_i(\kappa_i)$. For any time period $t$, we define the error  \cref{alg:hydrocommit} makes as $\Delta_t$. So
$\Bar{D}_{\mathcal{T},t}+\Delta_t\doteq \Bar{D}^*_{\mathcal{T},t}$, with 
$\sum_t\Delta_t=0$. 
\begin{theorem}
    \label{thm:optimality} If $\forall t, \;|\Delta_t|\leq K$\footnote{It's intuitive that any error 
cannot exceed the largest generator capacity. Otherwise \cref{alg:hydrocommit} could simply place that generator differently at the corresponding time step. We were unable to show $\forall t, \;|\Delta_t|\leq K$ rigorously. We found solutions that meet this bound, but not ones that exceed it. }, then
   $\text{Var}(\Bar{D}_\mathcal{T})-\text{Var}(\Bar{D}^*_\mathcal{T})\leq K^2+2K\sqrt{\text{Var}(\Bar{D}^*_\mathcal{T})}$. 
\end{theorem}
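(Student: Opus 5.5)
The plan is to write the algorithm's residual demand as the optimal one minus the error, $\Bar{D}_{\mathcal{T},t} = \Bar{D}^*_{\mathcal{T},t} - \Delta_t$, and expand the variance. We use the population variance over the $T$ periods, $\text{Var}(x)=\frac1T\sum_t (x_t-\bar{x})^2$. Since $\sum_t\Delta_t=0$, the mean of $\Delta$ vanishes, so the two residual demands have the same mean $\mu$. Writing $y_t\doteq\Bar{D}^*_{\mathcal{T},t}-\mu$, we get $\Bar{D}_{\mathcal{T},t}-\mu = y_t-\Delta_t$. Expanding gives
\begin{equation*}
\text{Var}(\Bar{D}_\mathcal{T}) = \frac1T\sum_t y_t^2 - \frac2T\sum_t y_t\Delta_t + \frac1T\sum_t \Delta_t^2 = \text{Var}(\Bar{D}^*_\mathcal{T}) - 2\,\text{Cov}(\Bar{D}^*_\mathcal{T},\Delta) + \text{Var}(\Delta).
\end{equation*}
The middle term is exactly a covariance, again because $\Delta$ has zero mean. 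It therefore suffices to bound the last two terms.

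For the quadratic term we use the hypothesis $|\Delta_t|\le K$ for every $t$. This gives $\text{Var}(\Delta)=\frac1T\sum_t\Delta_t^2\le K^2$ immediately.

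For the cross term we apply Cauchy--Schwarz:
\begin{equation*}
\Big|\tfrac1T\sum_t y_t\Delta_t\Big| \le \sqrt{\tfrac1T\sum_t y_t^2}\,\sqrt{\tfrac1T\sum_t\Delta_t^2} \le K\sqrt{\text{Var}(\Bar{D}^*_\mathcal{T})}.
\end{equation*}
Hence $-2\,\text{Cov}(\Bar{D}^*_\mathcal{T},\Delta)\le 2K\sqrt{\text{Var}(\Bar{D}^*_\mathcal{T})}$. Adding the two bounds gives the claimed inequality $\text{Var}(\Bar{D}_\mathcal{T})-\text{Var}(\Bar{D}^*_\mathcal{T})\le K^2+2K\sqrt{\text{Var}(\Bar{D}^*_\mathcal{T})}$.

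The main point needing care is the sign convention in $\Bar{D}_{\mathcal{T},t}+\Delta_t=\Bar{D}^*_{\mathcal{T},t}$. It produces the cross term with a minus sign, but we bound its absolute value anyway, so the sign does not matter. We also need the fact that $\sum_t\Delta_t=0$; this holds because both solutions use each generator exactly $\pi_i$ times, so the total hydro generation is the same. Once these are checked, the argument is a routine bias-variance expansion followed by Cauchy--Schwarz. The hard part, establishing $|\Delta_t|\le K$ itself, is assumed in the statement and is not addressed here.
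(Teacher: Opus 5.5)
Your proof is correct and matches the paper's argument: expand the variance difference around the common mean $\mu$ (shared because $\sum_t\Delta_t=0$), bound $\frac{1}{T}\sum_t\Delta_t^2\le K^2$, and control the cross term by Cauchy--Schwarz. Your sign bookkeeping is cleaner than the paper's, whose expansion uses $\Bar{D}^*_{\mathcal{T},t}+\Delta_t$ despite its own definition $\Bar{D}_{\mathcal{T},t}+\Delta_t=\Bar{D}^*_{\mathcal{T},t}$; as you note, this is harmless because only the magnitude of the cross term is bounded.
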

\begin{proof}
Let the mean remaining demand 
be $\mu\doteq \sum_t \Bar{D}_\mathcal{T}/T$, where $T$ is the number of periods. $\mu$ is identical for $\Bar{D}^*_\mathcal{T}$ and solutions produced by 
\cref{alg:hydrocommit} (or by any algorithm that assigns all units to their maximum number of periods).

\begin{align}
\label{eq:vardif}
&\text{Var}(\Bar{D}_\mathcal{T})-\text{Var}(\Bar{D}^*_\mathcal{T})
=\frac{1}{T}\sum_{t=1}^T [(\Bar{D}^*_{\mathcal{T},t}+\Delta_t-\mu)^2
 \nonumber \\
&- (\Bar{D}^*_{\mathcal{T},t}-\mu)^2] =\frac{1}{T}\sum_{t=1}^T \left[2\Delta_t(\Bar{D}^*_{\mathcal{T},t}-\mu)+\Delta_t^2\right].
\end{align}

Using the Cauchy–Schwarz inequality,
\begin{align}
&\frac{1}{T}\sum_{t=1}^T \Delta_t(\Bar{D}^*_{\mathcal{T},t}-\mu)
\leq \sqrt{\frac{1}{T}\sum_{t=1}^T \Delta_t^2}
\sqrt{\frac{1}{T}\sum_{t=1}^T (\Bar{D}^*_{\mathcal{T},t}-\mu)^2},\nonumber \\ \label{eq:vardif_final}
&\text{Var}(\Bar{D}_\mathcal{T})-\text{Var}(\Bar{D}^*_\mathcal{T})
\leq K^2+2K\sqrt{\text{Var}(\Bar{D}^*_\mathcal{T})}.
\end{align}

The last line uses $\frac{1}{T}\sum_{t=1}^T \Delta_t^2\leq K^2$ in 
\cref{eq:vardif}.
\end{proof}

We can rewrite the result of \cref{thm:optimality} as
\begin{align}
\label{eq:varbound}
&\text{Var}(\Bar{D}^*_\mathcal{T})
\leq \text{Var}(\Bar{D}_\mathcal{T})
\leq \text{Var}(\Bar{D}^*_\mathcal{T})
+ K^2 + 2K\sqrt{\text{Var}(\Bar{D}^*_\mathcal{T})} \nonumber \\
&\Rightarrow
1\leq
\frac{\text{Var}(\Bar{D}_\mathcal{T})}{\text{Var}(\Bar{D}^*_\mathcal{T})}
\leq
1 + \frac{K^2}{\text{Var}(\Bar{D}^*_\mathcal{T})}
+ \frac{2K}{\sqrt{\text{Var}(\Bar{D}^*_\mathcal{T})}}.
\end{align}

\begin{corollary}
\label{cor:optimal}
Under our assumptions and for our purposes, where load variation is much larger than the available hydropower 
($\text{Var}(\Bar{D}^*_\mathcal{T})\gg K^2$), \cref{alg:hydrocommit} produces 
nearly optimal solutions.
\end{corollary}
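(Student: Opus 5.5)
The plan is to read \cref{cor:optimal} directly off the ratio bound \cref{eq:varbound}, which was obtained from \cref{thm:optimality}. The hypothesis $|\Delta_t|\leq K$ for all $t$ is the standing assumption of that theorem. Under it, the only work left is to show that the right-hand side of \cref{eq:varbound} tends to $1$ when $\text{Var}(\Bar{D}^*_\mathcal{T})\gg K^2$. I will make ``nearly optimal'' quantitative by introducing the dimensionless parameter
\[
\varepsilon\doteq \frac{K}{\sqrt{\text{Var}(\Bar{D}^*_\mathcal{T})}},
\]
so that \cref{eq:varbound} reads
\[
1\leq \frac{\text{Var}(\Bar{D}_\mathcal{T})}{\text{Var}(\Bar{D}^*_\mathcal{T})}\leq 1+2\varepsilon+\varepsilon^2=(1+\varepsilon)^2 .
\]
The assumption $\text{Var}(\Bar{D}^*_\mathcal{T})\gg K^2$ is exactly $\varepsilon\ll 1$. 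The relative suboptimality is then at most $2\varepsilon+\varepsilon^2=O(\varepsilon)$, which vanishes as $\varepsilon\to 0$. Equivalently, taking square roots, the standard deviation of the remaining demand is within a factor $1+\varepsilon$ of optimal.

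The key steps, in order, are:
\begin{enumerate}
\item Restate \cref{thm:optimality} in its normalized form, dividing by $\text{Var}(\Bar{D}^*_\mathcal{T})>0$. I will note that a zero optimal variance contradicts the regime assumption.
\item Complete the square to get $(1+\varepsilon)^2$.
\item Conclude that the ratio is $1+O(\varepsilon)$.
\end{enumerate}

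To connect this to ``for our purposes'', I will argue that $\text{Var}(\Bar{D}^*_\mathcal{T})$ is bounded below by a quantity that does not depend on $K$. Since hydro is a small fraction of generation, the removed energy $\sum_i\kappa_i\pi_i$ is small relative to the spread of $D_t$. By the triangle inequality for standard deviations,
\[
\sqrt{\text{Var}(\Bar{D}^*_\mathcal{T})}\geq \sqrt{\text{Var}(D)}-\sqrt{\text{Var}\Bigl(\sum_i u^*_{i,t}\kappa_i\Bigr)}.
\]
So if total hydro output is small relative to demand fluctuations, $\varepsilon\leq K/(\sqrt{\text{Var}(D)}-\text{(hydro spread)})$, which is small.

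The main obstacle is making ``$\gg$'' precise without the unproven per-period bound $|\Delta_t|\leq K$. The corollary inherits that hypothesis from \cref{thm:optimality}, and I will state it explicitly rather than try to prove it here. A secondary difficulty is bounding the hydro spread term in the inequality above. My first attempt will be the crude bound $\sqrt{\text{Var}(\sum_i u_{i,t}\kappa_i)}\leq \max_t\sum_i u_{i,t}\kappa_i\leq\sum_i\kappa_i$. This bound is small in the PJM regime, where hydro is about 6\% of demand.
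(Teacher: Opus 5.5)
Your proposal is correct and is essentially the paper's argument: you read the corollary directly off \cref{eq:varbound}. Your bound $(1+\varepsilon)^2$ with $\varepsilon=K/\sqrt{\text{Var}(\Bar{D}^*_\mathcal{T})}$ is just a compact rewriting of its right-hand side, and you correctly carry $|\Delta_t|\leq K$ over as a hypothesis. Your additional triangle-inequality step replaces the condition on the unknown optimum by the checkable condition $K\ll\sqrt{\text{Var}(D)}-\sum_i\kappa_i$; this goes beyond the paper and is sound. However, the 6\% you cite is the paper's US-wide energy share, and it does not by itself make $\sum_i\kappa_i$ small relative to the spread of PJM demand, so that part remains an empirical check rather than a consequence.
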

Allowing hydro units to produce at partial capacity is equivalent to increasing $m$ and having fractional capacities $\kappa_i$.
\begin{corollary}
\cref{thm:optimality} and \cref{cor:optimal} hold even if our units can produce at partial capacity, because our proof makes no use of $m$ and $K$ does not increase. If the largest unit's production is split among periods (decreasing $K$), we get a constant factor improvement, with similar asymptotics.
\end{corollary}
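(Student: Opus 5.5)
The plan is to re-examine the chain of reasoning behind \cref{thm:optimality} and \cref{cor:optimal} and check that none of it depends on the binary on/off structure of the units, or on the number of units $m$. Only two ingredients enter the bound of \cref{thm:optimality}: the identity in \cref{eq:vardif}, and the per-period error bound $|\Delta_t|\leq K$.

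\textbf{Step 1: the mean is unchanged.} I would model partial-capacity production by splitting each unit $i$ into several virtual on/off units with fractional capacities $\kappa_{i,\ell}$, where $\sum_\ell \kappa_{i,\ell}=\kappa_i$. This increases $m$. The total energy each unit delivers over the window is still fixed by its water budget $\pi_i$. Hence $\mu=\frac1T\sum_t \Bar{D}_{\mathcal{T},t}$ is the same for the optimum and for the output of \cref{alg:hydrocommit}, so $\sum_t\Delta_t=0$ still holds.

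\textbf{Step 2: the algebra goes through unchanged.} The expansion in \cref{eq:vardif} and the Cauchy--Schwarz step are pure algebra on the vectors $(\Delta_t)$ and $(\Bar{D}^*_{\mathcal{T},t}-\mu)$. Neither uses $m$ nor integrality of $u_{i,t}$. So the same inequality
\[
\text{Var}(\Bar{D}_\mathcal{T})-\text{Var}(\Bar{D}^*_\mathcal{T})\leq K'^2+2K'\sqrt{\text{Var}(\Bar{D}^*_\mathcal{T})}
\]
follows, with $K'=\max_{i,\ell}\kappa_{i,\ell}$, provided the hypothesis $|\Delta_t|\leq K'$ holds.

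\textbf{Step 3: the capacity bound.} Each fractional capacity satisfies $\kappa_{i,\ell}\leq\kappa_i\leq K$, so $K'\leq K$. The right-hand side of the bound is increasing in $K$, so the original bound, and with it \cref{eq:varbound} and \cref{cor:optimal}, continue to hold. If the largest unit is actually split, then $K'<K$, and plugging $K'$ into \cref{eq:varbound} shrinks both error terms by a constant factor. The $K^2/\text{Var}$ and $K/\sqrt{\text{Var}}$ asymptotics stay the same.

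\textbf{Main obstacle.} The hard step is transferring the hypothesis $|\Delta_t|\leq K$ to the refined instance. The optimum of the partial-capacity problem differs from the binary optimum, so $\Delta_t$ is a new quantity. I would treat the bound exactly as \cref{thm:optimality} does, as an assumption, now stated with $K'$ in place of $K$. I would support it with the same heuristic exchange argument given in the footnote: any larger error could be reduced by moving a single virtual unit of capacity at most $K'$.
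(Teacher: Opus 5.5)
Your proposal is correct and follows the paper's route. The paper also models partial capacity as more units with fractional capacities, notes that the proof of \cref{thm:optimality} never uses $m$ or integrality, and relies on the maximal capacity not increasing (the bound is monotone in $K$, and splitting the largest unit shrinks it by a constant factor). Your remark that $|\Delta_t|\le K'$ has to be re-assumed for the refined instance, because its optimum differs, is a point the paper passes over; it is consistent with \cref{thm:optimality} being a conditional statement.
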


\section{Asymptotic run time}
\label{sec:AppendixB}

\begin{theorem}
    Solving \cref{eq:HydroCommit} is NP-hard.
    \label{thm:hydro np-hard}
\end{theorem}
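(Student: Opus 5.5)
We reduce from \textsc{Partition}, which is NP-complete in the weak sense. This suffices, since the capacities $\kappa_i$ are part of the input and may be given in binary.

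Let $a_1,\dots,a_n$ be positive integers with $S=\sum_i a_i$, and ask whether some subset sums to $S/2$. The instance of \cref{eq:HydroCommit} has $T=2$ periods and demands $D_1=D_2=0$, or any equal constant. It has $n$ generators with $\kappa_i=a_i$ and $\pi_i=1$, so each generator runs in exactly one of the two periods. A feasible commitment is then exactly a bipartition of $\{1,\dots,n\}$ into the sets $A$ and $B$ of generators run in periods $1$ and $2$. The remaining demands are $\Bar{D}_1=-\sum_{i\in A}a_i$ and $\Bar{D}_2=-\sum_{i\in B}a_i$.

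For two values, $\text{Var}(\Bar{D}_1,\Bar{D}_2)=\tfrac14(\Bar{D}_1-\Bar{D}_2)^2=\tfrac14\bigl(\sum_{i\in A}a_i-\sum_{i\in B}a_i\bigr)^2$. Hence the optimum of \cref{eq:HydroCommit} is $0$ if and only if the \textsc{Partition} instance is a yes-instance. Any algorithm solving \cref{eq:HydroCommit} exactly, even just deciding whether the optimum is $0$, would therefore decide \textsc{Partition}. The reduction is clearly polynomial: it has $2n$ binary variables and copies the numbers $a_i$ directly.

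Some remarks on robustness. If one objects to negative residual demands, take $D_1=D_2=S$ instead; the variance is unchanged by the constant shift. The paper's modelling assumptions, such as water levels being far from the bounds, play no role, because the combinatorial core is only the cardinality constraints $\sum_t u_{i,t}=\pi_i$. One could also reduce from \textsc{3-Partition} to obtain strong NP-hardness. That version would use $T=m$ periods, $3m$ generators with $\pi_i=1$, and equal demands, so that the variance is zero iff every period receives total capacity $B$. This shows hardness persists even with unary-encoded capacities.

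The only point needing care is the equivalence between "minimum variance equals $0$" and "equal sums" for $T=2$. With the explicit formula above this is immediate. I expect no real obstacle beyond stating which variant of NP-hardness (weak or strong) is claimed. I would present the \textsc{Partition} reduction as the main proof and mention \textsc{3-Partition} for the strong sense.
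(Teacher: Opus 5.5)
Your proof is correct and is the paper's argument. Both reduce from Partition using $T=2$, $\kappa_i=a_i$, $\pi_i=1$ and equal demands (the paper takes $D_1=D_2=\sum_i c_i$), so that zero variance is equivalent to equal hydro generation in the two periods. Your 3-Partition remark, which the paper does not make, additionally gives NP-hardness in the strong sense.
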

\begin{proof}
    We reduce from the partition problem, which is NP-complete \cite{Karp1972}. Given a vector \((c_1,\ldots,c_n)\in\mathbb{Z}^n\), the partition problem asks whether there exists \(I\subseteq\{1,\ldots,n\}\) satisfying
   \begin{equation}
           \sum_{i\in I}c_i = \sum_{i\notin I}c_i
        \label{eq:partition}   
   \end{equation}
    Without loss of generality, take \(c_i\geq 0\) for all \(i\).\par
    Given an instance of the partition problem \cref{eq:partition}, consider an instance of \cref{eq:HydroCommit} where \(T=2\), there are \(n\) generators, and \(\kappa_i=c_i, \pi_i = 1\) for all \(i\). Set \(D_1 = D_2=\sum_i c_i\). For this instance of \cref{eq:HydroCommit}, \(\OPT = 0\) if and only if \(D_1 - \sum_i\kappa_iu_{i,1} = D_2 - \sum_i\kappa_iu_{i,2}\) if and only if \(\sum_i\kappa_iu_{i,1} = \sum_{i}\kappa_iu_{i,2}\). This is possible if and only if there exists a set \(I\) satisfying the condition of the partition problem (i.e., \(I=\{1\leq i \leq n : u_{i,1} = 1\}\)).
\end{proof}
\begin{theorem}
\label{thm:runtime_hydro}
\cref{alg:hydrocommit} takes $O(n(\log n +T\log T))$ time, where $n$ is the number of hydro units and $T$ is the number of periods over which to optimize.
\end{theorem}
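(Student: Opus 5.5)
We will bound the cost of \cref{alg:hydrocommit} line by line, charging each operation on the indexed max-heap $H$ (of size $T$) $O(\log T)$ time. This covers insert, delete-max lookup, and key update via the stored index of each period. Building $H$ with keys $k_t=D_t$ costs $O(T)$ by bottom-up heapify. The initializations of $G_t$ and $\text{used}_t$ cost $O(T)$. The re-indexing by $\kappa_i\sqrt{\pi_i}$ is a comparison sort of $n$ keys, costing $O(n\log n)$. This accounts for the $n\log n$ term.

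The main work is the outer loop over generators $j$. Fix $j$; we count iterations of the inner \textbf{while} loop. Each iteration either places the unit (the \textbf{else} branch) or sets some $k_\tau\gets\infty$ (the \textbf{if} branch). Placements number exactly $\pi_j\le T$, since $\pi_i\doteq\min(W_i([T])/\tilde W_i,T)$. For the \textbf{if} branch, the key observation is that once $\text{used}_\tau=j$ the key $k_\tau$ is finite. When it is later found at the top, it is set to $\infty$ and stays $\infty$ for the rest of iteration $j$. A period whose key is $\infty$ but with $\text{used}_\tau=j$ would be hit again at the top, so we must be careful here. We will argue that each period $\tau$ triggers the \textbf{if} branch $O(1)$ times per $j$. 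Then the \textbf{if} branch runs at most $T$ times, and the while loop runs $O(T)$ iterations. Each iteration does one argmax lookup ($O(1)$ peek) plus at most one heap update ($O(\log T)$) and $O(1)$ arithmetic, giving $O(T\log T)$ per generator.

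The trailing \textbf{for} loop over $t$ scans all $T$ periods. It restores the finite key $D_t-G_t$ for each period with $k_t=\infty$, costing $O(T)$ for the scan and $O(\log T)$ per restore. That is at most $O(T\log T)$ per generator. Summing over the $n$ generators gives $O(nT\log T)$. Adding the preprocessing gives $O(n\log n+T+nT\log T)=O(n(\log n+T\log T))$, as claimed.

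The main obstacle is the termination and iteration-count argument for the while loop. We must make sure that setting a key to $\infty$ in a \emph{max}-heap does not make the algorithm repeatedly select the same already-used period. Read literally, an $\infty$ key would stay at the top. The intended semantics is clearly to exclude $\tau$ from further selection in round $j$, i.e.\ to push it to the bottom, equivalently $-\infty$ or a deletion. We will state this interpretation explicitly. Under it, each period is blocked at most once per round, and at most $T$ periods are selected or blocked. Moreover, since $\pi_j\le T$, the loop always finds enough unblocked periods to terminate. Under this reading, each round performs at most $T+\pi_j\le 2T$ heap operations, which completes the bound.
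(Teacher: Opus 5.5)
Your proposal is correct and uses the same line-by-line accounting as the paper: $O(n\log n+T)$ for initialization and sorting, then per generator $O(T)$ while-loop iterations plus a $T$-step restore loop, with each heap update costing $O(\log T)$. Your reading of the blocking step as $-\infty$ (or deletion) and your bound of at most $2\pi_j$ iterations per round (each period placed once and blocked at most once) are more careful than the paper, which ignores that a literal $+\infty$ key would stay at the top of the max-heap, and which asserts that each while-iteration considers a different period even though a just-placed period can be selected again immediately and blocked.
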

\begin{proof}
Lines $1-4, 24$ run in $O(n\log n+T)$ time.  The outer for loop (line \(5\)) runs $n$ times. The inner while loop on line \(7\) runs $O(T)$ times, since it considers a different period on each iteration. The inner for loop (line 18) runs $T$ times. All lines inside loops run in $O(1)$ time, except lines $9$ and $16$, which run in $O\left(\log T\right)$~\cite{CLRS2009}. Overall, \cref{alg:hydrocommit} runs in
$
    O\left(\underbrace{n\log n + T}_{\text{lines \(1-4,24\)}} + \underbrace{n(T\log T + T)}_{\text{lines \(5-23\)}}\right) = O(n(\log n +T\log T))
$
time.
\end{proof}

\bibliographystyle{IEEEtran}
\bibliography{bibfile}
\end{document}